\newtheorem{thm}{Theorem}
\newtheorem{cor}[thm]{Corollary}
\newtheorem{definition}{Definition}[section]
\newtheorem{lemma}[thm]{Lemma}
\title[Entropy, POTP and positively expansive measures]{Entropy, pseudo-orbit tracing property
and positively expansive measures}
\author{C.A. Morales}
\address{Instituto de Matem\'atica, Universidade Federal do Rio de Janeiro, P. O.
Box 68530, 21945-970 Rio de Janeiro, Brazil.}
\email{morales@impa.br.}
\subjclass[2010]{Primary  37B40; Secondary 37B05}
\keywords{Entropy, Pseudo-orbit Tracing Property, Positively Expansive Measure}
\begin{document}

\begin{abstract}
We study homeomorphisms of compact metric spaces
whose restriction to the nonwandering set has the
pseudo-orbit tracing property.
We prove that if there are positively expansive measures, then the topological entropy is positive.
Some short applications of this result are included.
\end{abstract}

\maketitle

\section{Introduction}

\noindent
It was proved in \cite{am} that
every continuous map with positive topological entropy of a compact metric space
exhibits a positively expansive measure.
It is then natural to ask if, conversely,
every continuous map exhibiting positively expansive measures of a compact metric space
has positively entropy.
However, the answer is negative since the Denjoy map \cite{d} is a circle homeomorphism with
both positively expansive measures and zero topological entropy \cite{ms}.
This makes us to re-ask:
Which homeomorphisms exhibiting positively expansive measures of a compact metric space have positive topological entropy?

In this paper we will address this last question. Indeed, we prove that every
homeomorphism exhibiting
positively expansive measures has positive topological entropy as soon as
its restriction to the nonwandering set has the
{\em pseudo-orbit tracing property} (POTP).
Some short applications are included. The proof will use some recent results about the structure of the POTP \cite{M}.
Our result can be added to the number of interesting properties
of the expansive measures (more properties are explained in \cite{ms}).
Let us state our result in a precise way.

Consider a homeomorphism $f: X\to X$ of a metric space $X$, and a bi-infinite sequence $(x_i)_{i\in\mathbb{Z}}\in X^\mathbb{Z}$.
Given $\delta\geq0$ we say that $(x_i)_{i\in\mathbb{Z}}$ is
a {\em $\delta$-pseudo-orbit} whenever $d(f(x_i),x_{i+1})\leq\delta$ for every $i\in\mathbb{Z}$.
We say that $(x_i)_{i\in\mathbb{Z}}$ can be {\em $\delta$-shadowed} if
there is $x\in X$ such that $d(f^i(x),x_i)\leq\delta$ for every $i\in\mathbb{Z}$.
We say that $f$ has the {\em pseudo-orbit tracing property} ({\em POTP} for short)
if for every $\epsilon>0$ there is $\delta>0$ such that
every $\delta$-pseudo-orbit of $f$ can be $\epsilon$-shadowed.
Analogously we define the POTP$_+$ for continuous maps instead of homeomosphisms
by replacing $\mathbb{Z}$ by $\mathbb{N}$ in the definition of POTP. It is well-known that
POTP and POTP$_+$ are equivalent for homeomorphisms \cite{p}.

On the other hand, the {\em topological entropy} of $f$
is defined by
$$
h(f)=\displaystyle\lim_{\epsilon\to0}\displaystyle\limsup_{n\to\infty}\frac{1}{n}\log M_f(n,\epsilon),
$$
where $M_f(n,\epsilon)$ is the minimal number of $\epsilon$-balls in the metric
$$
(x,y)\to \displaystyle\max_{0\leq i\leq n}d(f^i(x),f^i(y))
$$
needed to cover $X$ (see \cite{w0} for details).

We say that a (nonnecessarily invariant) Borel probability measure $\mu$ of $X$ is {\em positively expansive}
if there is $e>0$ (called {\em expansivity constant}) such that
$\mu(\Phi_e(x))=0$ for every $x\in X$, where
$$
\Phi_\delta(x)=\{y\in X:d(f^i(x),f^i(y))\leq r, \mbox{ for every } i\in\mathbb{N}\},
\quad\quad\forall x\in X,\delta\geq0.
$$
Recall that the {\em nonwandering set} $\Omega(f)$ of
a map $f: X\to X$ is
the set of points $x\in X$ such that for every neighborhood $U$ of $x$ there is $n\in\mathbb{N}^+$ satisfying
$f^n(U)\cap U\neq\emptyset$. Clearly if $f$ is a homeomorphism, then
$\Omega(f)$ is compact and invariant, i.e., $f(\Omega(f))=\Omega(f)$.

Our main result below gives a different condition
under which the POTP implies positive entropy (for more conditions see Section 5 in \cite{M}):

\begin{thm}
\label{thAA}
Let $f$ be a homeomorphism of a compact metric space
such that $f|_{\Omega(f)}$ has the POTP.
If $f$ has a positively expansive measure, then $f$ has positive topological entropy.
\end{thm}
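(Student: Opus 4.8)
The plan is to argue by contraposition: assuming $h(f)=0$, I will show that $f$ admits no positively expansive measure. The first step is to pass to the nonwandering set. It is classical that topological entropy is carried by the nonwandering set, so $h(f)=h(f|_{\Omega(f)})=0$, while by hypothesis $f|_{\Omega(f)}$ has the POTP and $\Omega(f)$ is compact and invariant. I would then invoke the structural analysis of the POTP from \cite{M} (the conditions collected in its Section~5): a homeomorphism whose restriction to the nonwandering set has the POTP and zero topological entropy must be equicontinuous on $\Omega(f)$, i.e. the family $\{f^i|_{\Omega(f)}\}_{i\geq 0}$ is uniformly equicontinuous. This is the external input that trades "zero entropy" for a usable topological tameness; pinning down the precise form of this dichotomy is where I would lean most heavily on \cite{M}.

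The second step converts equicontinuity into a covering by null sets. Let $e>0$ be an expansivity constant for the putative positively expansive measure $\mu$. Equicontinuity of $f|_{\Omega(f)}$ yields $\delta>0$ such that, for $x,y\in\Omega(f)$, $d(x,y)<\delta$ forces $d(f^i(x),f^i(y))\leq e$ for all $i\in\mathbb{N}$; in other words the relative ball $B(x,\delta)\cap\Omega(f)$ is contained in $\Phi_e(x)$. Covering the compact set $\Omega(f)$ by finitely many such balls $B(x_1,\delta),\dots,B(x_N,\delta)$ gives $\Omega(f)\subseteq\bigcup_{j=1}^N\Phi_e(x_j)$, whence $\mu(\Omega(f))\leq\sum_{j=1}^N\mu(\Phi_e(x_j))$. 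Positive expansivity makes every term vanish, so any positively expansive measure must satisfy $\mu(\Omega(f))=0$, i.e. be concentrated on the wandering set $X\setminus\Omega(f)$.

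The final, and I expect hardest, step is to exclude this possibility. The obstruction is that $\mu$ is not assumed invariant, so Poincar\'e recurrence is unavailable to declare the wandering set null, and one cannot simply restrict $\mu$ to $\Omega(f)$. My plan is to show directly that every wandering point $x$ is a point of forward equicontinuity of $f$ on the whole of $X$: the forward orbit of $x$ accumulates only on $\Omega(f)$, where the dynamics is already tame, and the POTP together with zero entropy should preclude any genuine expansion along the transient. Granting this, each wandering $x$ has a genuine ball $B(x,\delta_x)\subseteq\Phi_e(x)$, so $\mu(B(x,\delta_x))\leq\mu(\Phi_e(x))=0$; covering the wandering set, a Lindel\"of space, by countably many such balls gives $\mu(X\setminus\Omega(f))=0$ and hence $\mu(\Omega(f))=1$, contradicting the conclusion of the previous paragraph. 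The crux is therefore the passage from the topological tameness of $f|_{\Omega(f)}$ to control of the non-invariant measure off $\Omega(f)$; making the forward-equicontinuity claim at wandering points precise, or alternatively extracting from \cite{M} a dichotomy that already forces $\mu(\Omega(f))>0$, is the step for which I would budget the most effort.
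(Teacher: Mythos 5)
Your first two steps track the paper's own argument: passing to $\Omega(f)$, invoking the result of \cite{M} (Lemma \ref{l3} here) to obtain equicontinuity of $f|_{\Omega(f)}$ from $h(f)=0$ and the POTP, and covering $\Omega(f)$ by finitely many balls contained in sets $\Phi_e(\cdot)$ to conclude that any positively expansive measure assigns $\Omega(f)$ measure zero (essentially Lemma \ref{equicontinuous}). The gap is exactly where you locate it: excluding a positively expansive measure concentrated on the wandering set. Your proposed fix --- that every wandering point is a point of forward equicontinuity of $f$ on all of $X$ --- is not proved, and it appears to be false under the stated hypotheses. Sketch of a counterexample: take two fixed points $p,q$, a heteroclinic orbit $\{(p_k,0)\}_{k\in\mathbb{Z}}$ from $p$ to $q$, and heteroclinic orbits $O_n=\{(p_k,g(k-t_n))\}_{k\in\mathbb{Z}}$ with $t_n\to\infty$, where $g$ is a fixed bump with $g(0)=1$ and $g(j)\to0$ as $|j|\to\infty$. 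The closure is a compact set on which $f$ acts as a homeomorphism, $\Omega(f)=\{p,q\}$ has the POTP, and $h(f)=0$; yet the point $(p_0,g(-t_n))\in O_n$ converges to the base point $(p_0,0)$ while its $t_n$-th iterate sits at height $1$, so no point of the base orbit is a point of forward equicontinuity. The POTP of $f|_{\Omega(f)}$ together with zero entropy does not tame the transient dynamics, so this route cannot be completed as stated.

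The paper resolves the non-invariance issue differently, and this is the real technical content of the proof: Lemma \ref{exists-3} shows that the existence of any positively expansive measure forces the existence of an \emph{invariant} one. One first checks that $f^{-1}_*\mu$ is positively expansive with the same constant (Lemma \ref{exists-1}, from the inclusion $f(\Phi_\delta(x))\subset\Phi_\delta(f(x))$), then forms the Krylov--Bogolyubov averages $\mu_n=\frac{1}{n}\sum_{i=0}^{n-1}f^{-i}_*\mu$, and proves that a weak-$*$ limit of positively expansive measures with a common expansivity constant is again positively expansive \emph{provided the limit is invariant} (Lemma \ref{exists-2}; the invariance is what produces the boundary-null sets $W[x,n]$ that legitimize the double-limit interchange, at the cost of halving the expansivity constant). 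The resulting invariant positively expansive measure is supported on $\Omega(f)$ by Poincar\'e recurrence, and then your second step (equicontinuity on $\Omega(f)$) yields the contradiction directly. Without some version of Lemma \ref{exists-3}, your argument does not close.
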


Since the POTP for homeomorphisms implies
that of the corresponding restriction to the nonwandering set \cite{a}, we obtain
the following corollary from Theorem \ref{thAA}.

\begin{cor}
\label{thA}
Every homeomorphism with the POTP exhibiting positively expansive measures
of a compact metric space has positive topological entropy.
\end{cor}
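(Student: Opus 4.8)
Since by \cite{a} the POTP of $f$ passes to the restriction $f|_{\Omega(f)}$, Corollary \ref{thA} is an immediate consequence of Theorem \ref{thAA}, so the plan is to prove the theorem. I would argue by contraposition: assuming $h(f)=0$, I will exhibit a point $x$ with $\mu(\Phi_e(x))>0$ for the given expansivity constant $e$, contradicting positive expansiveness. The first step transfers the entropy hypothesis to the nonwandering set. Every $f$-invariant Borel probability measure is carried by $\Omega(f)$, so the variational principle gives $h(f)=h(f|_{\Omega(f)})$, and therefore $h(f|_{\Omega(f)})=0$. This is precisely the regime in which the structural results of \cite{M} apply, since $f|_{\Omega(f)}$ is a homeomorphism of the compact invariant set $\Omega(f)$ enjoying both the POTP and zero entropy.

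Next I would invoke the structure theory of \cite{M}: for a system with the POTP, vanishing entropy is incompatible with sensitivity, so $f|_{\Omega(f)}$ must be equicontinuous. Equicontinuity is exactly the obstruction to positive expansiveness. Indeed, if $g$ is an equicontinuous homeomorphism of a compact metric space, then for every $e>0$ there is a uniform $\delta>0$ with $B(x,\delta)\subseteq\Phi_e(x)$ for all $x$; covering the space by finitely many $\delta$-balls then forces $\mu(\Phi_e(x))>0$ for some $x$. Hence no equicontinuous system carries a positively expansive measure, and in particular $f|_{\Omega(f)}$ carries none.

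The difficulty is that the measure $\mu$ lives on all of $X$ and need not be supported on $\Omega(f)$; the Denjoy example shows that an equicontinuous restriction to $\Omega(f)$ can coexist with a positively expansive measure on $X$ once the POTP is dropped. So the heart of the proof is to use the POTP of $f|_{\Omega(f)}$ to propagate forward stability from $\Omega(f)$ to a neighborhood in $X$. For each $x\in X$ the set $\omega(x)$ lies in $\Omega(f)$, so the forward orbit of $x$ spends all but finitely much time near $\Omega(f)$; shadowing lets me match the forward orbit of a point $y$ close to $x$ with a genuine orbit in $\Omega(f)$, and the equicontinuity there keeps the two orbits $e$-close for all positive times. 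Made uniform over a finite cover of $X$, this yields a point $x$ with a whole ball contained in $\Phi_e(x)$, so that $\mu(\Phi_e(x))>0$, the desired contradiction. I expect the delicate point to be exactly this uniform propagation: one must prevent forward orbits from slowly accumulating separation as they drift among different pieces of $\Omega(f)$, and it is here --- rather than in mere recurrence --- that the POTP is indispensable.
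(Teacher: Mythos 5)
Your reduction of the corollary to Theorem \ref{thAA} via \cite{a}, your use of Moothathu's result to deduce equicontinuity of $f|_{\Omega(f)}$ from $h(f)=0$ plus the POTP, and your observation that an equicontinuous homeomorphism of a compact space carries no positively expansive measure all match the paper (Lemmas \ref{l3} and \ref{equicontinuous}). The divergence --- and the gap --- lies in how you handle the fact that $\mu$ need not live on $\Omega(f)$. The paper's solution is Lemma \ref{exists-3}: starting from a positively expansive measure $\mu$ with constant $\delta$, the push-forwards $f^{-i}_*\mu$ are positively expansive with the same constant (Lemma \ref{exists-1}), the Ces\`aro averages $\frac{1}{n}\sum_{i=0}^{n-1}f^{-i}_*\mu$ form a sequence of positively expansive measures with a common constant, and a weak-* limit point is an \emph{invariant} measure that is again positively expansive (Lemma \ref{exists-2}, the technical heart of the paper, which requires a careful double-limit argument using sets whose boundaries are null). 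An invariant measure is automatically supported on $\Omega(f)$, so Lemma \ref{equicontinuous} applies directly to $f|_{\Omega(f)}$ and no propagation off the nonwandering set is needed.

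Your substitute --- using shadowing to show that every $x\in X$ admits a ball $B(x,\delta_x)\subseteq\Phi_e(x)$ --- is only a sketch, and the step you yourself flag as delicate is exactly where it breaks down. To run the covering argument you need, for each $x$ (or at least for enough $x$ to cover the support of $\mu$), some $\delta_x>0$ such that \emph{every} $y\in B(x,\delta_x)$ satisfies $d(f^i(x),f^i(y))\le e$ for all $i\ge 0$. The forward orbit of $x$ does eventually enter and remain in any neighborhood of $\Omega(f)$, but the entrance time of a nearby point $y$ is not controlled by that of $x$: after the finite window where plain continuity keeps the two orbits together, $f^i(y)$ may still be in transit through the wandering region while $f^i(x)$ has already settled near $\Omega(f)$, and nothing in the POTP of $f|_{\Omega(f)}$ --- which only shadows pseudo-orbits lying \emph{in} $\Omega(f)$ by orbits lying \emph{in} $\Omega(f)$ --- bounds their separation during that transit. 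So the pointwise forward equicontinuity of $f$ on all of $X$ that your covering argument requires is unproved, and it is not an evident consequence of your hypotheses. Either supply that argument in full or, more economically, insert the invariant-measure step: it is precisely what makes the propagation unnecessary.
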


We know from Lemma 3.1.2 in \cite{p} that
every orientation-preserving circle homeomorphism with the POTP exhibits periodic points
(actually it has at least two, see Theorem 3.1.1 in \cite{p}).
The following is a slight improvement of this result.

\begin{cor}
\label{c1}
Every circle homeomorphism $f$ for which
$f|_{\Omega(f)}$ has the POTP exhibits periodic point.
\end{cor}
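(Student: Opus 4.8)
The plan is to argue by contradiction: I would assume that $f$ has no periodic point and deduce that one of the hypotheses must fail. First I would reduce to the orientation-preserving case, since every orientation-reversing circle homeomorphism has fixed points (in fact exactly two, by the intermediate value theorem applied to a lift), and a fixed point is already a periodic point. So under the standing assumption $f$ must be orientation-preserving and, having no periodic orbit, must have irrational rotation number $\rho(f)$. By the Poincar\'e classification this splits into two cases according to whether or not $f$ is minimal.

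In the minimal case one has $\Omega(f)=S^1$, so the hypothesis that $f|_{\Omega(f)}$ has the POTP says exactly that $f$ itself has the POTP. Since $f$ is orientation-preserving, Lemma 3.1.2 in \cite{p} then forces $f$ to have a periodic point, contradicting our assumption. (Equivalently, $f$ is conjugate to the irrational rotation $R_{\rho(f)}$, and such a rotation is easily seen not to have the POTP, while the POTP is invariant under topological conjugacy of homeomorphisms of compact metric spaces; either way this case cannot occur.)

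In the non-minimal case $f$ is a Denjoy homeomorphism: its nonwandering set $\Omega(f)$ is a Cantor set, namely the unique minimal set, the rest of the circle being filled by wandering intervals. Here I would invoke two facts. First, every homeomorphism of the circle has zero topological entropy, so $h(f)=0$. Second, by \cite{ms} such a Denjoy homeomorphism carries a positively expansive measure, supported on the Cantor set $\Omega(f)$. Feeding the hypothesis that $f|_{\Omega(f)}$ has the POTP together with this measure into Theorem \ref{thAA} yields $h(f)>0$, contradicting $h(f)=0$. Thus the non-minimal case is also impossible, and in either case $f$ must have a periodic point.

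The step I expect to be the main obstacle is the non-minimal case, specifically pinning down that the positively expansive measure produced in \cite{ms} for the Denjoy example is available for every non-minimal circle homeomorphism with irrational rotation number, and that it is positively expansive as a measure of $f$ on the whole circle (not merely of $f|_{\Omega(f)}$ on the Cantor set), as required to apply Theorem \ref{thAA}. The remaining ingredients — the fixed-point count for orientation-reversing maps, the Poincar\'e dichotomy, and the vanishing of the entropy of circle homeomorphisms — are standard and should cause no trouble.
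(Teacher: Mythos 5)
Your proposal is correct and follows essentially the same route as the paper: assume no periodic points, invoke the Poincar\'e dichotomy, rule out the (minimal) rotation case because the POTP fails there, and rule out the Denjoy case by feeding the positively expansive measure from \cite{ms} into Theorem \ref{thAA} against the zero entropy of circle homeomorphisms. The extra care you take with the orientation-reversing case and with where the expansive measure lives only makes explicit what the paper leaves implicit.
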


\begin{proof}
Suppose by contradiction that $f$ has no periodic points.
Then, $f$ is topologically conjugate to either a rigid rotation by
an irrational angle or a Denjoy map \cite{d}.
In the first case we have that $\Omega(f)$ is the entire circle
and $f$ does not have the POTP.
Thus, we can assume that $f$ is the Denjoy map and so has
positively expansive measures \cite{ms}.
Then, the entropy would be positive  by Theorem \ref{thAA} which is absurd
for circle homeomorphisms.
This proves the result.
\end{proof}

Every surface homeomorphism with the POTP exhibits periodic points \cite{kp}.
This motivates the question if Corollary \ref{c1} is true for surface homeomorphisms instead of circle
homeomorphisms.

For the second application, let us remember that a homeomorphism $f$ of a metric space $X$ is {\em topologically stable} if
for every $\epsilon>0$ there is $\delta>0$ such that for every homeomorphism $g:X\to X$
satisfying $\sup_{x\in X}d(f(x),g(x))<\delta$ there is a continuous map
$h: X\to X$ such that $\sup_{x\in X}d(x,h(x))<\epsilon$ and $h\circ g=f\circ h$.
Every topologically stable homeomorphism of a compact manifold has the POTP,
this is the shadowing lemma by proved Walters \cite{w} (in dimension $2$ or higher) and Morimoto \cite{m} (in dimension $1$).
From this and Theorem \ref{thA} we obtain the following corollary.

\begin{cor}
Every topologically stable homeomorphism with positively expansive measures
of a compact manifold has positive topological entropy.
\end{cor}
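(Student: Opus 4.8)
The plan is to reduce this corollary directly to Corollary~\ref{thA} by manufacturing the POTP out of topological stability. A compact manifold is in particular a compact metric space, so the ambient setting already matches the hypotheses of Corollary~\ref{thA}; the only genuine task is to verify that a topologically stable homeomorphism of such a manifold satisfies the POTP.

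First I would invoke the shadowing lemma recalled immediately before the statement. By Walters \cite{w} in dimension $2$ or higher, and by Morimoto \cite{m} in dimension $1$, every topologically stable homeomorphism of a compact manifold has the POTP. This is exactly the bridge connecting the two hypotheses: topological stability on the one side, the POTP demanded by Corollary~\ref{thA} on the other. I would emphasize citing both sources, since the shadowing lemma is established separately in the one-dimensional and higher-dimensional regimes, and together they cover all compact manifolds.

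With the POTP now available, and with the assumed existence of a positively expansive measure, the conclusion is immediate: Corollary~\ref{thA} applies verbatim and yields positive topological entropy. I do not expect any real obstacle here, as the substance of the argument has already been absorbed into Theorem~\ref{thAA} and Corollary~\ref{thA}; the present corollary is a straightforward concatenation, whose only subtlety is the dimension-dependent citation for the shadowing lemma.
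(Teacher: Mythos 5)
Your proposal is correct and follows exactly the paper's route: the shadowing lemma of Walters (dimension $\geq 2$) and Morimoto (dimension $1$) converts topological stability into the POTP, after which Corollary~\ref{thA} applies directly. Nothing further is needed.
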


Theorem \ref{thA} will be obtained
from the fact that every homeomorphism with positively expansive measures
of a compact metric space also has positively expansive {\em invariant} measures.
This result was mentioned with sketch of the proof in \cite{ms}.
We combine this ingredient with some recent properties
of continuous map with POTP on compact spaces \cite{M}.

This theorem suggests the following question:
Does every continuous map with the POTP$_+$ exhibiting
positively expansive measures of a compact metric space have positive topological entropy?
We can also ask if this theorem is true replacing the POTP by another type of tracing property (see \cite{p} for more of these properties).
In light of \cite{cm} it is natural to pursue our results from discrete to continuous dynamical systems.

\section{Proof of Theorem \ref{thA}}

\noindent
We start with the following simple observation about maps $f: X\to X$ on metric spaces $X$:
$$
f(\Phi_\delta(x))\subset\Phi_\delta(f(x)),
\quad\quad\forall (x,\delta)\in X\times \mathbb{R}^+.
$$
On the other hand, for any Borel measure $\mu$ of $X$ we define
$f_*(\mu)=\mu\circ f^{-1}$.
From the above inclusion we obtain easily the following lemma.

\begin{lemma}
\label{exists-1}
Let $f: X\to X$ be a homeomorphism of a metric space $X$.
If $\mu$ is a positively expansive measure with expansivity constant $\delta$ of $f$,
then so does $f^{-1}_*\mu$.
\end{lemma}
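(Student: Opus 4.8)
The plan is to verify the definition of positive expansivity directly for the pushforward measure $f^{-1}_*\mu$, keeping the expansivity constant equal to $\delta$. So I would fix an arbitrary point $x\in X$ and aim to show that $(f^{-1}_*\mu)(\Phi_\delta(x))=0$. The first step is to rewrite this quantity in terms of $\mu$ itself: since $f^{-1}_*\mu=\mu\circ(f^{-1})^{-1}=\mu\circ f$, for any Borel set $A$ one has $(f^{-1}_*\mu)(A)=\mu(f(A))$. Applying this with $A=\Phi_\delta(x)$ gives $(f^{-1}_*\mu)(\Phi_\delta(x))=\mu(f(\Phi_\delta(x)))$.

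Next I would invoke the opening observation $f(\Phi_\delta(x))\subset\Phi_\delta(f(x))$. By monotonicity of $\mu$ this yields $\mu(f(\Phi_\delta(x)))\le\mu(\Phi_\delta(f(x)))$. Because $\mu$ is assumed positively expansive with expansivity constant $\delta$, the right-hand side vanishes, i.e.\ $\mu(\Phi_\delta(f(x)))=0$. Chaining these relations gives $(f^{-1}_*\mu)(\Phi_\delta(x))=0$, and since $x$ was arbitrary this is precisely the assertion that $f^{-1}_*\mu$ is positively expansive with the same constant $\delta$.

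Two small points must be checked so that every expression is well defined. First, $\Phi_\delta(x)=\bigcap_{i\in\mathbb{N}}\{y\in X:d(f^i(x),f^i(y))\le\delta\}$ is a countable intersection of closed sets, hence Borel, and since $f$ is a homeomorphism the image $f(\Phi_\delta(x))$ is again Borel, so the measures above are legitimate. Second, and this is really the only place where one could slip, the direction of the pushforward must be tracked carefully: it is essential that $f^{-1}_*\mu$ evaluated on $A$ equals $\mu$ of the \emph{forward} image $f(A)$ rather than $\mu(f^{-1}(A))$, for it is exactly this that makes the inclusion $f(\Phi_\delta(x))\subset\Phi_\delta(f(x))$ the relevant one. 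Beyond this bookkeeping there is no genuine obstacle; the lemma follows immediately from the displayed inclusion together with monotonicity of the measure.
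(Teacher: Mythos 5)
Your proof is correct and is exactly the argument the paper intends: it states the lemma follows "easily" from the displayed inclusion $f(\Phi_\delta(x))\subset\Phi_\delta(f(x))$, and your computation $(f^{-1}_*\mu)(\Phi_\delta(x))=\mu(f(\Phi_\delta(x)))\le\mu(\Phi_\delta(f(x)))=0$ is precisely that derivation, with the direction of the pushforward tracked correctly.
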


Another useful observation is as follows.
Given a map $f: X\to X$, $x\in X$, $\delta>0$ and $n\in \mathbb{N}^+$ we define
$$
V[x,n,\delta]=\{y\in X:d(f^i(x),f^i(y))\leq \delta,\mbox{ for all } 0\leq i\leq n\},
$$
i.e.,
$$
V[x,n,\delta]=\displaystyle\bigcap_{i=0}^nf^{-i}(B[f^i(x),\delta]),
$$
where $B[\cdot,\cdot]$ denotes the closed ball operation.
It is clear that
$$
\Phi_\delta(x)=\bigcap_{n\in \mathbb{N}^+}V[x,n,\delta]
$$
and that $V[x,n,\delta]\supset V[x,m,\delta]$ for $n\leq m$.
Consequently,
\begin{equation}
\label{epa0}
\mu(\Phi_\delta(x))=\lim_{l\to\infty}\mu(V[x,k_l,\delta])
\end{equation}
for every $x\in X$, $\delta>0$, every Borel probability measure $\mu$ of $X$, and every sequence $k_l\to \infty$.
From this we have the following lemma.

\begin{lemma}
 \label{suff-hom}
Let $f: X\to X$ be a homeomorphism of a metric space $X$.
A Borel probability measure $\mu$ is a positively expansive measure
if and only if there is
$\delta>0$ such that
$$
\liminf_{n\to\infty}\mu(V[x,n,\delta])=0,
\quad\quad\mbox{ for all } x\in X.
$$
\end{lemma}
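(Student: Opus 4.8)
The plan is to read off both implications directly from the identity \eqref{epa0}, which already packages the continuity from above of the finite measure $\mu$ along the decreasing sequence of sets $V[x,n,\delta]$. The only bookkeeping to keep track of is that the expansivity constant appearing in the definition and the constant $\delta$ appearing in the stated condition can be taken to be the same, so that the two assertions become equivalent pointwise in $x$.

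First I would handle the forward implication. Assuming $\mu$ is positively expansive with expansivity constant $\delta$, we have $\mu(\Phi_\delta(x))=0$ for every $x\in X$. Applying \eqref{epa0} with the particular sequence $k_l=l$ gives $\lim_{n\to\infty}\mu(V[x,n,\delta])=\mu(\Phi_\delta(x))=0$; since an ordinary limit exists and vanishes, its $\liminf$ vanishes as well. Thus the same $\delta$ witnesses the stated condition, and this direction requires nothing beyond one instance of \eqref{epa0}.

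For the converse I would run the argument backwards. Suppose there is $\delta>0$ with $\liminf_{n\to\infty}\mu(V[x,n,\delta])=0$ for all $x\in X$, and fix $x$. Because the quantities $\mu(V[x,n,\delta])$ are nonnegative, a vanishing $\liminf$ forces a sequence $k_l\to\infty$ with $\mu(V[x,k_l,\delta])\to 0$. Feeding precisely this sequence into \eqref{epa0} yields $\mu(\Phi_\delta(x))=\lim_{l\to\infty}\mu(V[x,k_l,\delta])=0$. As $x$ was arbitrary, $\mu$ is positively expansive with expansivity constant $\delta$.

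I do not expect a genuine obstacle here: the entire content lies in \eqref{epa0}, whose validity rests on the monotonicity $V[x,n,\delta]\supset V[x,m,\delta]$ for $n\le m$, the identity $\Phi_\delta(x)=\bigcap_{n}V[x,n,\delta]$, and the finiteness of $\mu$. The one point deserving a moment's care is the use of an \emph{arbitrary} sequence $k_l\to\infty$ in the converse: this is exactly why \eqref{epa0} is stated for every such sequence rather than merely for $k_l=l$, and it is what lets one convert the subsequential information carried by the $\liminf$ into a statement about $\mu(\Phi_\delta(x))$ itself.
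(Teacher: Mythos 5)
Your proof is correct and is exactly the argument the paper intends: the lemma is stated there as an immediate consequence of \eqref{epa0}, and you use that identity in both directions, correctly noting that its validity for an arbitrary sequence $k_l\to\infty$ (equivalently, the monotonicity of $n\mapsto\mu(V[x,n,\delta])$, which makes the $\liminf$ a genuine limit) is what closes the converse. No gaps.
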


We shall use this information in the following lemma.
Recall that a Borel measure is {\em invariant} for a map $f$ if $\mu=\mu\circ f^{-1}$.

\begin{lemma}
\label{exists-2}
If $f: X\to X$ is a homeomorphism of a metric space $X$,
then every invariant measure of $f$ which is the limit (with respect to the weak-* topology\index{Topology!weak-*})
of a sequence of positively
expansive measures with a common expansivity constant is positively expansive.
\end{lemma}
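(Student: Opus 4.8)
The plan is to verify the criterion of Lemma~\ref{suff-hom} for the limit measure $\mu$. Write $\mu_k\to\mu$ in the weak-$*$ topology, where each $\mu_k$ is positively expansive with the \emph{common} expansivity constant $e$, so that $\mu_k(\Phi_e(x))=0$ for every $k$ and every $x$. Fix any $\delta$ with $0<\delta<e$. Since a closed ball of radius $\delta$ is contained in the open ball of radius $e$, for each $x$ and each $n$ we have $V[x,n,\delta]\subset V^o[x,n,e]\subset V[x,n,e]$, where $V^o[x,n,e]=\bigcap_{i=0}^n f^{-i}(B(f^i(x),e))$ is the open-ball analogue of $V[x,n,e]$, hence an open set. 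Because $\mu_k\to\mu$ weak-$*$, the portmanteau inequality applied to the open set $V^o[x,n,e]$ gives $\mu(V^o[x,n,e])\le\liminf_k\mu_k(V^o[x,n,e])\le\liminf_k\mu_k(V[x,n,e])$. Combining this with $\Phi_\delta(x)\subset V[x,n,\delta]\subset V^o[x,n,e]$ and with \eqref{epa0} yields the basic estimate
\[
\mu(\Phi_\delta(x))\le\liminf_{k\to\infty}\mu_k(V[x,n,e]),\qquad\text{for every }x\text{ and every }n.
\]

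The main obstacle is to let $n\to\infty$ in this estimate. For each fixed $k$, positive expansiveness gives $\lim_n\mu_k(V[x,n,e])=\mu_k(\Phi_e(x))=0$; the difficulty is that $\Phi_e(x)=\bigcap_n V[x,n,e]$ is a \emph{countable} (not finite) intersection, so weak-$*$ convergence only controls the finite-level sets, and the two limits in $\inf_n\liminf_k\mu_k(V[x,n,e])$ cannot be exchanged for free. This is exactly where invariance of $\mu$ must enter. Consider $h(x)=\mu(\Phi_\delta(x))$. From the elementary inclusion $f(\Phi_\delta(x))\subset\Phi_\delta(f(x))$ and the invariance identity $\mu(f(A))=\mu(A)$ one gets $h(x)\le h(f(x))$, so $h$ is nondecreasing along orbits; integrating against the invariant measure $\mu$ forces $h\circ f=h$ $\mu$-almost everywhere, i.e.\ $h$ is an invariant function.

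Granting this, the plan is to establish first the weaker, integrated conclusion $h=0$ $\mu$-a.e.\ --- equivalently $(\mu\times\mu)(\Psi_\delta)=0$ for the closed, $(f\times f)$-forward-invariant set $\Psi_\delta=\{(x,y):d(f^i(x),f^i(y))\le\delta\ \text{for all}\ i\in\mathbb{N}\}$, whose $x$-sections are the $\Phi_\delta(x)$ --- by feeding the basic estimate into the $(f\times f)$-invariant product measure $\mu\times\mu$; here the common constant $e$ gives $(\mu_j\times\mu_k)(\Psi_e)=0$ for all $j,k$, and it is invariance that I expect to be used to complete the passage $n\to\infty$. Once $h=0$ $\mu$-a.e.\ is known, the final step upgrades it to \emph{every} point by a triangle-inequality argument that no longer uses weak-$*$ convergence: if $z,y\in\Phi_{\delta/2}(x)$ then $d(f^i(z),f^i(y))\le\delta$ for all $i$, so $\Phi_{\delta/2}(x)\subset\Phi_\delta(y)$; were $\mu(\Phi_{\delta/2}(x))>0$ for some $x$, the positive-measure set $\Phi_{\delta/2}(x)$ would meet the full-measure set $\{h=0\}$ at some $y$, giving $\mu(\Phi_{\delta/2}(x))\le\mu(\Phi_\delta(y))=h(y)=0$, a contradiction. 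Hence $\mu(\Phi_{\delta/2}(x))=0$ for every $x$, and Lemma~\ref{suff-hom} (with constant $\delta/2$) shows that $\mu$ is positively expansive. I expect the genuine difficulty to be concentrated entirely in the interchange of the limits $n\to\infty$ and $k\to\infty$, which the invariance of $\mu$ is precisely there to resolve.
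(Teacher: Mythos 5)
There is a genuine gap: the proposal never carries out the one step on which the whole lemma rests. Your ``basic estimate'' $\mu(\Phi_\delta(x))\le\liminf_k\mu_k(V[x,n,e])$ is correct but vacuous on its own --- for fixed $n$ the right-hand side can equal $1$ for every $n$ (think of the model double sequence $a_{n,k}=1$ for $n\le k$ and $0$ otherwise, which is compatible with $\lim_n\mu_k(V[x,n,e])=\mu_k(\Phi_e(x))=0$ for each fixed $k$). You correctly identify that everything reduces to interchanging the limits $n\to\infty$ and $k\to\infty$, but then you only \emph{announce} that invariance will resolve it (``it is invariance that I expect to be used to complete the passage $n\to\infty$''); no mechanism is supplied. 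The one concrete route you sketch --- passing to the product measure and using $(\mu_j\times\mu_k)(\Psi_e)=0$ --- would also fail as stated, because $\Psi_\delta$ is closed and the portmanteau inequality for closed sets runs in the wrong direction ($\limsup_{j,k}(\mu_j\times\mu_k)(\Psi_\delta)\le(\mu\times\mu)(\Psi_\delta)$ gives no upper bound on $(\mu\times\mu)(\Psi_\delta)$); replacing $\Psi_\delta$ by open finite-level neighborhoods just reproduces the same interchange problem one level up. The observation that $h(x)=\mu(\Phi_\delta(x))$ satisfies $h\le h\circ f$ and hence $h=h\circ f$ $\mu$-a.e.\ is true but is never connected to the missing step.

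For comparison, the paper's proof uses invariance in a quite different and essential way: for each $x$ it chooses radii $\delta/2<\delta_x<\delta$ with $\mu(\partial B[x,\delta_x])=0$ and forms $W[x,n]=\bigcap_{i=0}^nf^{-i}(B[f^i(x),\delta_{f^i(x)}])$; invariance of $\mu$ is what makes $\mu(\partial W[x,n])=0$, so that these are continuity sets and the portmanteau theorem yields genuine \emph{two-sided} convergence $\mu_k(W[x,n])\to\mu(W[x,n])$ for each fixed $n$, not merely a $\liminf$ inequality for open sets. Only with this convergence in hand does the paper attack the interchange of the two limits (via a double-limit argument along subsequences), and the squeeze $V[x,n,\delta/2]\subset W[x,n]\subset V[x,n,\delta]$ then delivers $\liminf_n\mu(V[x,n,\delta/2])=0$. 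Your proposal contains neither the continuity-set construction nor any substitute for it, so as written it is a plan rather than a proof. (Your final ``a.e.\ to everywhere'' upgrade via $\Phi_{\delta/2}(x)\subset\Phi_\delta(y)$ for $y\in\Phi_{\delta/2}(x)$ is correct and would be a nice finishing move, but the almost-everywhere statement it needs is exactly what was never established.)
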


\begin{proof}
Denote by $Cl(\cdot)$ and $Int(\cdot)$ the closure and interior operation in $X$.
Denote also by $\partial A=Cl(A)\setminus Int(A)$ the boundary of a subset $A\subset X$.
Let $\mu$ be an invariant probability measure of $f$.
As in the proof of Lemma 8.5 p. 187 in \cite{hk} for all $x\in X$ we can find $\frac{\delta}{2}<\delta_x<\delta$ such that
$$
\mu(\partial(B[x,\delta_x]))=0.
$$
This allows us to define
$$
W[x,n]=\displaystyle\bigcap_{i=0}^nf^{-i}(B[f^i(x),\delta_{f^i(x)}]),
\quad\quad\forall (x,n)\in X\times\mathbb{N}.
$$
Since $\frac{\delta}{2}<\delta_x<\delta$, we can easily verify that
\begin{equation}
\label{epa1}
V\left[x,n,\frac{\delta}{2}\right]\subset W[x,n]\subset V[x,n,\delta], \quad\quad\forall (x,n)\in X\times \mathbb{N}.
\end{equation}
Moreover,
since $f$ (and so $f^{-i}$) are homeomorphisms, one has
$$
\partial(W[x,n])=\partial\left(\displaystyle\bigcap_{i=0}^nf^{-i}(B[f^i(x),\delta_{f^i(x)}])\right)
\subset
$$
$$
\displaystyle\bigcup_{i=0}^n\partial\left(
f^{-i}(B[f^i(x),\delta_{f^i(x)}])\right)
=
\displaystyle\bigcup_{i=0}^nf^{-i}\left(
\partial(B[f^i(x),\delta_{f^i(x)}])\right),
$$
and, since $\mu$ is invariant,
$$
\mu(\partial(W[x,n]))
\leq
\displaystyle\sum_{i=0}^n
\mu(f^{-i}\left(
\partial(B[f^i(x),\delta_{f^i(x)}])\right))
=
$$
$$
\displaystyle\sum_{i=0}^n
\mu(\partial(B[f^i(x),\delta_{f^i(x)}]))=0.
$$
We conclude that
\begin{equation}\label{epa2}
\mu(\partial(W[x,n]))=0,\quad\quad \forall (x,n)\in X\times \mathbb{N}.
\end{equation}

Now, suppose that $\mu$ is the weak-* limit of a sequence of positively expansive measures $\mu_n$ with a common expansivity
constant $\delta$.
Clearly, $\mu$ is also a probability measure.
Fix $x\in X$. 
Since each $\mu_n$ is a probability we have
$0\leq \mu_m(W[x,n])\leq 1$
for all $n,m\in \mathbb{N}$.
Then, we can apply
the Bolzano-Weierstrass Theorem\index{Theorem!Bolzano-Weierstrass} to find sequences $k_l, r_s\to\infty$ for which the double limit
$$
\displaystyle\lim_{l,s\to\infty}\mu_{r_s}(W[x,k_l])
$$
exists.

On the one hand, for fixed $l$, using (\ref{epa2}), $\mu_n\to \mu$ and well-known properties of the weak-* topology
(e.g. Theorem 6.1-(e) p. 40 in \cite{pa}) one has that the limit
$$
\lim_{s\to\infty}\mu_{r_s}(W[x,k_l])=\mu(W[x,k_l])
$$
exists.

On the other hand, the second inequality in (\ref{epa1}) and (\ref{epa0}) imply for fixed $s$ that
$$
\lim_{l\to\infty}\mu_{r_s}(W[x,k_l])\leq \lim_{l\to\infty}\mu_{r_s}(V[x,k_l,\delta])
=\mu_{r_s}(\Gamma_\delta(x))=0.
$$
Consequently, the limit
$$
\lim_{l\to\infty}\mu_{r_s}(W[x,k_l])=0
$$
also exists for fixed $s$.

From these assertions and well-known properties of double sequences one obtains
$$
\lim_{l\to\infty}\lim_{s\to\infty}\mu_{r_s}(W[x,k_l])=\lim_{s\to\infty}\lim_{l\to\infty}\mu_{r_s}(W[x,k_l])=0.
$$
But (\ref{epa1}) implies
$$
\liminf_{n\to\infty}\mu\left(V\left[x,n,\frac{\delta}{2}\right]\right)\leq\lim_{l\to\infty}\mu\left(V\left[x,k_l,\frac{\delta}{2}\right]\right)\leq
\lim_{l\to\infty}\mu(W[x,k_l])
$$
and $\mu_n\to \mu$ together with (\ref{epa2}) yields
$$
\lim_{l\to\infty}\mu(W[x,k_l])
=
\lim_{l\to\infty}\lim_{s\to\infty}\mu_{r_s}(W[x,k_l])
$$
so
$$
\liminf_{n\to\infty}\mu\left(V\left[x,n,\frac{\delta}{2}\right]\right)=0
$$
and then $\mu$ is positively expansive by Lemma \ref{suff-hom}.
\end{proof}

These results imply the following lemma.

\begin{lemma}
\label{exists-3}
A homeomorphisms of a compact metric space has positively
expansive measures if and only if it has positively expansive {\em invariant} measures.
\end{lemma}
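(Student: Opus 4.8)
The plan is to prove the nontrivial implication by a Krylov--Bogolyubov averaging argument, with the two preceding lemmas supplying exactly the pieces that make the averages and their limit positively expansive. The converse implication is immediate: an invariant positively expansive measure is, in particular, a positively expansive measure, so nothing is to be done there.

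For the forward direction, suppose $f$ carries a positively expansive measure $\mu$ with expansivity constant $\delta$. First I would record that the backward pushforwards behave well. Iterating Lemma \ref{exists-1} and using the identity $f^{-k}_*\mu=f^{-1}_*(f^{-(k-1)}_*\mu)$, one obtains by induction that every $f^{-k}_*\mu$ with $k\in\mathbb{N}$ is again positively expansive with the \emph{same} constant $\delta$. I would then check the elementary but crucial fact that a finite convex combination of positively expansive measures sharing the constant $\delta$ is itself positively expansive with constant $\delta$, since $(\sum_j t_j\mu_j)(\Phi_\delta(x))=\sum_j t_j\mu_j(\Phi_\delta(x))=0$ for every $x\in X$. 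Hence each Ces\`aro average
$$
\nu_n=\frac{1}{n}\sum_{k=0}^{n-1}f^{-k}_*\mu
$$
is positively expansive with the common constant $\delta$.

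Next I would invoke compactness of $X$: the space of Borel probability measures is weak-* compact and metrizable, so some subsequence $\nu_{n_j}$ converges weak-* to a probability measure $\nu$. The standard Krylov--Bogolyubov estimate $f^{-1}_*\nu_n-\nu_n=\frac{1}{n}(f^{-n}_*\mu-\mu)$ shows that, tested against any continuous function, the difference is $O(1/n)$; letting $j\to\infty$ and using weak-* continuity of $f^{-1}_*$ gives $f^{-1}_*\nu=\nu$, so $\nu$ is $f^{-1}$-invariant and therefore $f$-invariant. Since $\nu$ is an invariant measure realized as the weak-* limit of the positively expansive measures $\nu_{n_j}$ with the common expansivity constant $\delta$, Lemma \ref{exists-2} applies and yields that $\nu$ is positively expansive. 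This $\nu$ is the desired invariant positively expansive measure.

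The main obstacle is arranging the hypotheses of Lemma \ref{exists-2} exactly, and this is where the choice of \emph{backward} pushforwards $f^{-k}_*$ matters. Positive expansivity is defined through forward orbits, so it need not survive $f_*$; Lemma \ref{exists-1} guarantees preservation only under $f^{-1}_*$. Averaging over $f^{-k}_*$ (rather than the more customary $f^k_*$) is precisely what keeps a single constant $\delta$ alive along the whole sequence, which is the one ingredient Lemma \ref{exists-2} cannot supply on its own. Everything else---invariance of the limit and the passage to a weak-* convergent subsequence---is routine once the compactness of $X$ is used.
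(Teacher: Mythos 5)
Your proof is correct and follows essentially the same route as the paper: iterate Lemma \ref{exists-1} to keep the constant $\delta$ along the backward pushforwards, form the Ces\`aro averages, extract a weak-* limit by compactness, verify invariance, and conclude via Lemma \ref{exists-2}. The only difference is that you make explicit two points the paper leaves tacit (that convex combinations preserve positive expansivity with a common constant, and the Krylov--Bogolyubov estimate for invariance), which is a welcome clarification rather than a departure.
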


\begin{proof}
Let $\mu$ be a positively expansive measure with expansivity constant $\delta$ of a homeomorphism $f: X\to X$ of a compact metric space $X$.
By Lemma \ref{exists-1} we have that $f^{-1}_*\mu$ is a positively expansive measure with positive expansivity constant $\delta$ of $f$.
Therefore,
$f^{-i}_*\mu$ is a positively expansive measure with positively expansivity constant $\delta$ of $f$ ($\forall i\in \mathbb{N}$), and so,
$$
\mu_n=\frac{1}{n}\displaystyle\sum_{i=0}^{n-1}f^{-i}_*\mu,
\quad\quad n\in\mathbb{N}^+
$$
is a sequence of positively expansive measures with common expansivity constant $\delta$.
As $X$ is compact there is a subsequence $n_k\to\infty$ such that
$\mu_{n_k}$ converges to a Borel probability measure $\mu$. Since $\mu$ is clearly invariant for $f^{-1}$ and $f$ is a homeomorphism,
we have that $\mu$ is also an invariant measure of $f$.
Then, we can apply
Lemma \ref{exists-2} to this sequence to obtain that $\mu$ is
a positively expansive measure of $f$.
\end{proof}

Recall that a map $f$ of a metric space is {\em equicontinuouus}
if for every $\epsilon>0$ there is $\delta>0$ such that
if $x,y\in X$ satisfy $d(x,y)<\delta$, then $d(f^i(x),f^i(y))<\epsilon$ for every $i\in\mathbb{N}$.
The following lemma is a particular case of Corollary 6 in \cite{M}.

\begin{lemma}
 \label{l3}
If $f$ is a homeomorphism with the POTP$_+$ of a compact metric space and $h(f)=0$, then $f|_{\Omega(f)}$ is equicontinuous.
\end{lemma}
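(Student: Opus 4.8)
The plan is to argue by contraposition: supposing that $f|_{\Omega(f)}$ is \emph{not} equicontinuous, I will manufacture positive topological entropy. Two preliminary reductions make this possible. First, since $f$ has the POTP$_+$ (equivalently the POTP), the restriction $g=f|_{\Omega(f)}$ inherits the POTP by \cite{a}. Second, every $f$-invariant Borel probability is supported on $\Omega(f)$, so the variational principle gives $h(f)=h(g)$; thus it suffices to prove that a homeomorphism of a compact metric space with the POTP that is not equicontinuous has positive entropy, applied to $g$.

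Next I would read off the structure that the POTP imposes on $\Omega(f)$. Shadowing forces the nonwandering set to coincide with the chain recurrent set, which splits into closed invariant chain-transitive classes; on each class, chain transitivity together with shadowing promotes to genuine topological transitivity, because a $\delta$-chain joining two points can be $\epsilon$-shadowed by a true orbit segment. Failure of equicontinuity of $g$ provides a constant $c>0$ and, for every $\delta>0$, points $x,y\in\Omega(f)$ with $d(x,y)<\delta$ and an $N$ with $d(f^N(x),f^N(y))\ge c$. This is the sensitivity ingredient that I will turn into entropy.

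The core is a coding (horseshoe) construction. Regarding the two orbit segments $x,\dots,f^N(x)$ and $y,\dots,f^N(y)$ as two ``letters'' that begin $\delta$-close but end $c$-apart, I would use chain transitivity to build short bridging chains returning their endpoints to the common starting region, and then, for each word $w\in\{0,1\}^m$, concatenate the chosen segments and bridges into a single $\delta$-pseudo-orbit of length $n\approx m(N+L)$, with $L$ the bridge length. Distinct words disagree in some block and are therefore $\ge c$ apart at the end of that block; fixing $\epsilon<c/3$ and taking $\delta$ from the POTP for this $\epsilon$, each word is $\epsilon$-shadowed by a genuine orbit, and different words yield orbits that are separated in the metric $(u,v)\mapsto\max_{0\le i\le n}d(f^i(u),f^i(v))$. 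Hence $M_f(n,\epsilon)\gtrsim 2^m$ with $n$ linear in $m$, giving $h(g)\ge \frac{\log 2}{N+L}>0$ and therefore $h(f)>0$, the desired contradiction.

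The step I expect to be the main obstacle is the \emph{localization} of the sensitivity and the independence of the coding. I must guarantee that the separating pair $x,y$ can be chosen inside a single chain-transitive class, so that all four points $x,y,f^N(x),f^N(y)$ are joined by the $\delta$-chains needed to splice the letters with independent choices at the successive blocks, and that a single shadowing constant $\delta$ serves uniformly as $m\to\infty$. Nearby points of $\Omega(f)$ need not lie in the same chain class, so extracting a genuinely sensitive pair \emph{within} one transitive piece---precisely where the finer structure theory of the POTP from \cite{M} is used---is the delicate point; once this is arranged, the separated-orbit count and the entropy estimate are routine.
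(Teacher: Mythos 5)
First, note that the paper does not actually prove this lemma: it is imported verbatim as ``a particular case of Corollary 6 in \cite{M}''. So you are in effect reconstructing Moothathu's argument, and your overall strategy --- reduce to $g=f|_{\Omega(f)}$ via Aoki's theorem and $h(f)=h(g)$, then convert failure of equicontinuity plus shadowing into a two-symbol coding and hence positive entropy --- is the right one and is essentially how \cite{M} proceeds.

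As a proof, however, the proposal has a genuine gap, and it is exactly the one you flag yourself: the whole construction hinges on placing the separating pair $x,y$ and the bridging chains inside a single chain-transitive class, and you leave this step unresolved. Deferring ``the delicate point'' to ``the finer structure theory of the POTP from \cite{M}'' is circular here, since that structure theory is precisely what a self-contained proof of this lemma must supply. Two further steps are also skipped. (i) Even inside one class you only get chain \emph{transitivity}, not chain \emph{mixing}; for two distinct words to disagree at a \emph{common} time (which is what $(n,\epsilon)$-separation requires) the $0$-block and the $1$-block must have equal total length, so you must produce bridges of matching lengths --- this needs an argument about the semigroup of available return lengths, or a reduction to a chain-mixing power of $f$. (ii) The shadowing available to you is that of $g$ on $\Omega(f)$, so every pseudo-orbit you build, bridges included, must lie in $\Omega(f)$; that each point of $\Omega(f)$ is chain recurrent \emph{for the restriction} (chains inside $\Omega(f)$) is a separate fact you would need to invoke. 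Let me also point out that the same-class difficulty can in fact be sidestepped: you never need a chain from $f^N(y)$ to $x$, only a chain from $f^N(x)$ back to $x$ and one from $f^N(y)$ back to $y$ (each point separately is chain recurrent), because the switch between $x$ and $y$ at the start of each block is itself a legal $\delta$-jump since $d(x,y)<\delta$. With that modification, plus the length-matching and the localization of the chains in $\Omega(f)$, your sketch can be completed; as written, it is not yet a proof.
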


It is well-known that a equicontinuous homeomorphism of a compact metric space has zero topological entropy. The following lemma improves
this result as follows.

\begin{lemma}
\label{equicontinuous}
An equicontinuous homeomorphism of a compact metric space has no
positively expansive measures.
\end{lemma}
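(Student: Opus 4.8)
The plan is to argue directly from the definition of a positively expansive measure, using equicontinuity to manufacture, for \emph{every} candidate expansivity constant $e>0$, a point whose $\Phi_e$-set carries positive mass. Let $f\colon X\to X$ be an equicontinuous homeomorphism of a compact metric space and let $\mu$ be an arbitrary Borel probability measure on $X$; I will show that $\mu$ cannot be positively expansive, which suffices since $\mu$ is arbitrary. Recall that being positively expansive requires the existence of a single $e>0$ with $\mu(\Phi_e(x))=0$ for all $x$, so to refute it I must produce, for each $e>0$, some point $x$ with $\mu(\Phi_e(x))>0$.

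The heart of the argument is the equicontinuity step. Fixing $e>0$ and applying the definition of equicontinuity with $\epsilon=e$, I obtain $\eta>0$ such that $d(x,y)<\eta$ forces $d(f^i(x),f^i(y))<e$ for every $i\in\mathbb{N}$. Since $e<e$ is replaced here by the nonstrict bound in the definition of $\Phi_e$, this says precisely that the open $\eta$-ball satisfies $B(x,\eta)\subset\Phi_e(x)$ for \emph{every} $x\in X$ simultaneously. The uniformity of $\eta$ in $x$ supplied by equicontinuity is exactly what makes this a pointwise-uniform inclusion rather than a statement holding only near a single base point.

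Next I would bring in compactness. Cover $X$ by finitely many balls $B(x_1,\eta),\dots,B(x_k,\eta)$; since $\mu(X)=1>0$, at least one of them, say $B(x_0,\eta)$, has $\mu(B(x_0,\eta))>0$. Combined with the inclusion from the previous step this yields $\mu(\Phi_e(x_0))\ge\mu(B(x_0,\eta))>0$, so $e$ fails to be an expansivity constant for $\mu$. Because $e>0$ was arbitrary, no expansivity constant can exist, and therefore $\mu$ is not positively expansive; as $\mu$ ranged over all Borel probability measures, $f$ admits none.

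I expect no serious analytic obstacle here: the content is elementary once the quantifiers are set up correctly. The only point demanding care is logical rather than technical, namely that refuting positive expansivity means establishing the inclusion $B(x,\eta)\subset\Phi_e(x)$ for \emph{every} $e>0$, which is why $\eta$ must be re-extracted from equicontinuity for each $e$ rather than fixed once. An equivalent route would pass through Lemma \ref{suff-hom}: from $B(x_0,\eta)\subset\Phi_e(x_0)\subset V[x_0,n,e]$ for all $n$ one gets $\liminf_{n\to\infty}\mu(V[x_0,n,e])\ge\mu(B(x_0,\eta))>0$, so the vanishing criterion of that lemma fails for the constant $e$, giving the same conclusion.
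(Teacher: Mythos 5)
Your proof is correct and is essentially the same argument as the paper's: equicontinuity applied with $\epsilon=e$ gives a uniform $\eta>0$ with $B(x,\eta)\subset\Phi_e(x)$ for all $x$, and a finite cover of the compact space by $\eta$-balls forces some $\Phi_e(x_0)$ to have positive $\mu$-measure. The paper phrases this as a contradiction from a fixed expansivity constant while you run the contrapositive over all $e>0$, but the content is identical.
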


\begin{proof}
Let $f$ be a equicontinuous homeomorphism
of a compact metric space.
Suppose by contradiction that $f$ has a positively expansive measure
$\mu$.
Take an expansivity constant $e$ of $\mu$. 
Putting it in the definition of equicontinuity
we obtain $\delta>0$ such that $B[x,\delta]\subset \Phi_e(x)$ for every $x\in X$.
From this we obtain $\mu(B[x,\delta])=0$ for every $x\in\Omega(f)$.
But $X$ is compact so there are finitely many points $x_1,\cdots, x_k$
such that
$X=\bigcup_{i=1}^kB[x_i,\delta]$.
Then,
$$
\mu(X)\leq
\displaystyle\sum_{i=1}^k\mu(B[x_i,\delta])=0
$$
which is absurd.
This concludes the proof.
\end{proof}

\begin{proof}[Proof of Theorem \ref{thA}]
Let $f$ be a homeomorphism of a compact metric space
such that $f|_{\Omega(f)}$ has the POTP.
Suppose by contradiction that $f$ has a positively expansive measure but $h(f)=0$.
By Lemma \ref{exists-3} we have that 
$f$ has a positively expansive invariant measure $\mu$.
Since $\mu$ is invariant, we have that $\mu$ is supported on
$\Omega(f)$.
On the other hand, POTP and POTP$_+$ are equivalent among homeomorphisms \cite{p}.
Since $h(f)=0$, we conclude that $f|_{\Omega(f)}$ is equicontinuous by Lemma \ref{l3}.
This and the existence of $\mu$ contradicts Lemma \ref{equicontinuous}.
\end{proof}


\begin{thebibliography}{00}


\bibitem{am}
Arbieto, A., Morales, C. A.,
Some properties of positive entropy maps,
{\em Ergodic Theory Dynam. Systems} 34 (2014), no. 3, 765--776.



\bibitem{a}
Aoki, N.,
On homeomorphisms with pseudo-orbit tracing property,
{\em Tokyo J. Math.} 6 (1983), no. 2, 329--334. 









\bibitem{cm}
Carrasco-Olivera, D., Morales, C.A.,
Expansive measures for flows,
{\em J. Differential Equations} 256 (2014), no. 7, 2246--2260.

\bibitem{d}
Devaney, R.,
{\em An Introduction to Chaotic Dynamical Systems}.
Benjamin/Cummings Publishing, (1988).


\bibitem{hk}
Katok, A., Hasselblatt, B., 
{\em Introduction to the modern theory of dynamical systems. With a supplementary
chapter by Katok and Leonardo Mendoza},
Encyclopedia of Mathematics and its Applications, 54. Cambridge University Press, Cambridge, 1995.



\bibitem{kp}
Koropecki, A., Pujals, E.R.,
Some consequences of the shadowing property in low dimensions,
{\em Ergodic Theory Dynam. Systems} 34 (2014), no. 4, 1273--1309. 




\bibitem{M}
Moothathu, T.K.S.,
Implications of pseudo-orbit tracing property for continuous maps on compacta,
{\em Topology Appl.} 158 (2011), no. 16, 2232--2239. 







\bibitem{ms}
Morales, C.A., Sirvent, V.,
{\em Expansive measures}.
Publica\c c\~oes Matem\'aticas do IMPA. [IMPA Mathematical Publications] 29o
Col\'oquio Brasileiro de Matem\'atica. Instituto Nacional de Matem\'atica Pura e Aplicada (IMPA), Rio de Janeiro, 2013.






\bibitem{m}
Morimoto, A.,
Stochastically stable diffeomorphisms and Takens conjecture.
Local dynamical systems: integral and differential equations (Proc. Sympos., Res. Inst. Math. Sci., Kyoto Univ., Kyoto, 1977).
{\em S\^urikaisekikenky\^usho K\`oky\^uroku} No. 303 (1977), 8--24.



\bibitem{pa}
Parthasarathy, K. R.,
{\em Probability measures on metric spaces},
Probability and Mathematical Statistics, No. 3 Academic Press, Inc., New York-London 1967.




\bibitem{p}
Pilyugin, S.Yu.,
{\em Shadowing in dynamical systems}.
Lecture Notes in Mathematics, 1706. Springer-Verlag, Berlin, 1999.


\bibitem{w0}
Walters, P.,
{\em An introduction to ergodic theory}.
Graduate Texts in Mathematics, 79. Springer-Verlag, New York-Berlin, 1982.




\bibitem{w}
Walters, P.,
On the pseudo-orbit tracing property and its relationship to stability.
The structure of attractors in dynamical systems (Proc. Conf., North Dakota State Univ., Fargo, N.D., 1977), pp. 231--244,
{\em Lecture Notes in Math.}, 668, Springer, Berlin, 1978. 










\end{thebibliography}
\end{document}